\documentclass[11pt,a4paper]{article}

\usepackage[utf8]{inputenc}
\usepackage[T1]{fontenc} 
\usepackage[english]{babel}

\usepackage{geometry}
\usepackage{amsmath}
\usepackage{amsfonts}
\usepackage{amssymb}
\usepackage{amsthm}
\usepackage{mathtools} 
\usepackage{mathrsfs}  
\usepackage{relsize}
\usepackage{bigints}
\usepackage{mathtools}

\usepackage{graphicx}
\usepackage{float}
\usepackage{subfig}
\usepackage{wrapfig}
\usepackage{array}
\usepackage{multirow}
\usepackage{diagbox}
\usepackage{booktabs} 
\usepackage{algorithm}
\usepackage{algpseudocode}
\usepackage[toc,page]{appendix}

\usepackage{cite}
\usepackage[fixlanguage]{babelbib}
\usepackage{url}
\usepackage[dvipsnames]{xcolor}
\usepackage[unicode,
            psdextra,
            colorlinks=true,
            linkcolor=blue,
            citecolor=green!60!black, 
            urlcolor=blue
           ]{hyperref}
           
\usepackage[nameinlink,capitalize,noabbrev]{cleveref}

\makeindex

\theoremstyle{plain} 
\newtheorem{thm}{Theorem}[section]
\newtheorem{prop}[thm]{Proposition}
\newtheorem{corollary}[thm]{Corollary}  

\theoremstyle{definition} 
\newtheorem{definition}[thm]{Definition}

\theoremstyle{remark} 

\newcommand{\R}{\mathbb{R}}

\providecommand{\keywords}[1]{\vspace{3mm}\noindent\textbf{Keywords: } #1}
\providecommand{\classifications}[1]{\noindent\textbf{Mathematics Subject Classification (2020):} #1}

\title{
Data-Driven Diffusion Processes on Differential Forms via the Projected Ambient Connection Laplacian
}

\author{
Alvaro Almeida Gomez\\
Centro de Modelamiento Matemático (CNRS IRL 2807),\\
Universidad de Chile,\\
Beaucheff 851, Santiago, Chile\\
\texttt{alvaroalmeidagomez182@gmail.com}
\and
Jorge Duque Franco\\
Dirección de Investigación, Vicerrectoría Académica,\\
Instituto de Matemáticas, Universidad de Talca,\\
Casilla 721, Talca, Chile\\
\texttt{jorge.duque@utalca.cl}
}

\date{\today}

\begin{document}

\maketitle

\begin{abstract}
We develop a data-driven approximation of the projected ambient connection Laplacian acting on differential forms over smooth Riemannian manifolds sampled by point clouds. The proposed construction extends the classical framework of diffusion maps and Vector Diffusion Maps from scalar functions and tangent vector fields to differential forms of arbitrary degree.

Our approach is based on a novel representation of differential forms as alternating differential arrays obtained through an extension of the classical musical isomorphism. This representation enables the construction of a matrix-valued diffusion operator that approximates the projected ambient connection Laplacian directly from point cloud data without requiring a mesh or simplicial complex. 

The proposed discretization admits the asymptotically optimal kernel bandwidth scaling inherited from diffusion maps, leading to sharper convergence guarantees than previous data-driven approximations of the Hodge Laplacian. Building upon this operator, we derive a fully data-driven explicit Euler scheme for the heat equation on differential forms and validate the proposed methodology through numerical experiments on the unit sphere. The experiments confirm the predicted  decay of the analytical solution and demonstrate the effectiveness of the proposed discretization.

The proposed framework provides a natural generalization of Vector Diffusion Maps to differential forms of arbitrary degree and establishes a practical foundation for the numerical approximation of geometric partial differential equations directly from point cloud data.
\end{abstract}

\keywords{
Diffusion Maps,
Differential Forms,
Connection Laplacian,
Heat Equation,
Point Clouds,
Manifold Learning
}

\classifications{58J35, 58J50, 53C21,65D18,35K08.}

\section{Introduction}

Differential forms constitute one of the fundamental mathematical tools for
describing geometric and physical phenomena on smooth manifolds.
They provide a coordinate-free framework for formulating conservation laws,
electromagnetism, fluid dynamics, and geometric partial differential equations.
Among the most important differential operators acting on differential forms
is the Hodge Laplacian, whose spectral and analytical properties encode both
the topology and the geometry of the underlying manifold
\cite{rosenberg1997laplacian,jost2011riemannian}.

The numerical approximation of differential operators on manifolds has received
considerable attention over the last two decades.
Classical approaches rely on finite element exterior calculus,
discrete exterior calculus (DEC),
or finite difference methods defined over triangulated manifolds.
Finite element exterior calculus provides a mathematically rigorous framework
for discretizing the de Rham complex while preserving its cohomological
structure \cite{Arnold2006}.
Similarly, discrete exterior calculus constructs discrete analogues of
differential forms and exterior derivatives on simplicial complexes,
preserving many geometric identities exactly
\cite{Desbrun2005,Hirani2003}.
Although these approaches possess excellent theoretical properties,
they require the availability of a mesh or simplicial decomposition,
which becomes increasingly difficult to construct for high-dimensional data
or manifolds represented only through point clouds.

The emergence of manifold learning has motivated the development of
mesh-free approximations of differential operators directly from sampled data.
Among the most influential techniques are diffusion maps,
introduced by Coifman and Lafon \cite{coifman2006diffusion},
which recover the Laplace--Beltrami operator through kernel integral operators.
Subsequent convergence analyses established rigorous asymptotic error estimates
for graph Laplacians constructed from random samples
\cite{singer2006graph,Hein2007}.
These developments have led to numerous data-driven numerical methods
for approximating intrinsic geometric quantities without requiring an explicit
triangulation of the manifold.

Beyond scalar-valued functions, diffusion-based methods have also been extended
to vector-valued data on manifolds. A prominent example is the framework of
Vector Diffusion Maps (VDM), introduced by Singer and Wu
\cite{singer2012vector}, which provides a data-driven approximation of the
connection Laplacian acting on tangent vector fields. The key idea is to combine
local diffusion with orthogonal alignments between neighboring tangent spaces,
thereby capturing the intrinsic geometry of the underlying manifold. The
resulting operator has become a fundamental tool in manifold learning,
geometric data analysis, and shape processing, with notable applications
including the reconstruction of three-dimensional molecular structures in
cryo-electron microscopy (Cryo-EM) \cite{singer2011three}.
Nevertheless, existing VDM methodologies are fundamentally restricted to
vector fields, corresponding to differential $1$-forms under the classical
musical isomorphism.

The numerical treatment of higher-order differential forms directly from
point clouds remains considerably less developed.
Although discrete exterior calculus naturally generalizes to arbitrary
degrees, its dependence on simplicial complexes limits its applicability in
many data-driven settings.
Recently, several mesh-free approaches have been proposed for approximating
the exterior derivative and Hodge Laplacian on differential forms using local
kernel constructions and point cloud data
\cite{Belkin2008,almeida6959520local}.
These methods demonstrate that differential forms can be approximated directly
from sampled manifolds while preserving important geometric properties.

The objective of the present work is to extend the diffusion maps philosophy
from vector fields to differential forms of arbitrary degree.
Our starting point is a novel representation of differential forms through
alternating differential arrays introduced in
\cite{almeida6959520local}.
Using an extension of the classical musical isomorphism,
every differential $k$-form is identified with an alternating tensor-valued
array embedded in the ambient Euclidean space.
This representation allows differential forms to be manipulated using standard
linear algebra while preserving their intrinsic geometric meaning.

Building upon this representation, we introduce a data-driven approximation of
the projected ambient connection Laplacian acting on differential forms.
Unlike the intrinsic connection Laplacian, the ambient operator acts
componentwise in the embedding space, making it particularly suitable for
kernel-based approximations. By projecting this operator onto the bundle of
alternating differential arrays, we obtain a well-defined operator on
differential forms. By the Weitzenb\"ock formula, this operator differs from
the Hodge Laplacian only by explicit intrinsic and extrinsic zero-order
curvature terms.

An important advantage of the proposed projected ambient connection Laplacian
over the data-driven approximation of the Hodge Laplacian introduced in
\cite{almeida6959520local} is that our construction admits the asymptotically
optimal bandwidth parameter
\[
t = N^{-2/(d+6)},
\]
where $N$ denotes the number of sample points. This scaling is optimal for
uniformly distributed samples, as established in \cite{singer2006graph}, and
enables a substantially sharper theoretical convergence analysis. In contrast,
the approximation error obtained in \cite{almeida6959520local} is of order
\[
\mathcal{O}\!\left(\frac{1}{\sqrt{\log\log N}}\right).
\]
The optimal choice of the bandwidth therefore yields a significantly improved
theoretical approximation error while preserving the intrinsic geometric
structure of the underlying manifold.

The resulting discretization naturally extends the framework of Vector
Diffusion Maps (VDM). For arbitrary
degrees $k$, however, it defines a matrix-valued diffusion operator acting on
alternating differential arrays, thereby providing a unified data-driven
framework for approximating the projected ambient connection Laplacian on
differential forms of arbitrary degree.

The main contributions of this paper are summarized as follows.

\begin{itemize}
\item We introduce a computational representation of differential
forms as alternating differential arrays through an extension of the
musical isomorphism.

\item We construct a projected ambient connection Laplacian acting on
differential arrays and establish its relationship with the Hodge Laplacian.

\item We derive a fully data-driven matrix approximation of this operator
directly from point cloud data using diffusion kernels.

\item We develop an explicit Euler scheme for the corresponding heat equation
and validate the proposed discretization through numerical experiments on the
unit sphere.
\end{itemize}

The remainder of the paper is organized as follows.
Section~2 introduces differential arrays and extends the musical
isomorphism to arbitrary differential forms.
Section~3 develops the projected ambient connection Laplacian and its
data-driven matrix representation.
Section~4 presents the numerical discretization of the associated heat
equation together with computational experiments.
Finally, Section~5 concludes the paper and discusses future research
directions.

\section{Differential Arrays}
\label{formasdiferenciables}

Throughout this section, we work in the following geometric setting:
$\mathcal{M}$ is a compact, boundaryless Riemannian submanifold of
$\mathbb{R}^n$, where the inner product on each tangent space
$T_x\mathcal{M}$ is induced by the standard inner product of $\mathbb{R}^n$.

In this section, we introduce the notion of \emph{data-driven differential
arrays}. Concretely, given a differential $k$-form $\omega$ on $\mathcal{M}$
and a finite set of sample points $x_1, x_2, \ldots, x_N \in \mathcal{M}$,
we address the following question: since differential forms are inherently
abstract objects, how can one construct a concrete computational
representation of $\omega$ at the sample points?

To address this question, we introduce \emph{$k$-dimensional arrays over
$\mathbb{R}^n$}, which generalize vectors ($k = 1$) and matrices ($k = 2$).

\begin{definition}
\label{def:k-array}
A \emph{$k$-dimensional array} in $\mathbb{R}^n$ is a function
\begin{equation*}
    f \colon \underbrace{I_n \times I_n \times \cdots \times I_n}_{k\ \text{times}}
    \to \mathbb{R},
\end{equation*}
where $I_n \coloneqq \{1, 2, \ldots, n\}$.
The set of all $k$-dimensional arrays in $\mathbb{R}^n$ forms a vector space
under pointwise addition and scalar multiplication.
\end{definition}

Our goal is to associate with each differential $k$-form $\omega$ on
$\mathcal{M}$ a data object, defined at the sample points, that faithfully
encodes the action of $\omega$. More precisely, for each sample point
$x_i \in \mathcal{M}$, we seek a $k$-dimensional array
$W_i \colon I_n^k \to \mathbb{R}$ that represents the action of
$\omega_{x_i}$ on tangent vectors to $\mathcal{M}$. Our approach extends
the classical musical isomorphism from alternating $1$-forms to alternating
$k$-forms for arbitrary $k \geq 1$.

We recall that, for any linear subspace $V \subseteq \mathbb{R}^n$ and any
linear functional $\ell \in V^{*}$, there exists a unique vector $W \in V$
such that
\begin{equation*}
    \ell(v) = \langle W, v \rangle \qquad \text{for all } v \in V,
\end{equation*}
where $\langle \cdot, \cdot \rangle$ denotes the standard inner product on
$\mathbb{R}^n$. Applying this fiberwise to each tangent space
$T_x\mathcal{M} \subseteq \mathbb{R}^n$, every differential $1$-form on
$\mathcal{M}$ is canonically identified with a vector field on $\mathcal{M}$.
This identification is the classical musical isomorphism for $k = 1$, and
our construction extends it to all $k \geq 1$.

Given vectors $v_1, v_2, \ldots, v_k \in \mathbb{R}^n$, their
\emph{tensor product} $v_1 \otimes v_2 \otimes \cdots \otimes v_k$ is the
$k$-dimensional array defined by
\begin{equation*}
    (v_1 \otimes v_2 \otimes \cdots \otimes v_k)(i_1, i_2, \ldots, i_k)
    \coloneqq
    v_1(i_1)\,v_2(i_2)\cdots v_k(i_k),
\end{equation*}
for every $(i_1,\ldots,i_k)\in I_n^k$.

For a linear subspace $V \subseteq \mathbb{R}^n$, we denote by
\begin{equation*}
    V^{\otimes k}
    \coloneqq
    \underbrace{V \otimes V \otimes \cdots \otimes V}_{k\ \mathrm{times}}
\end{equation*}
the linear span of all elementary tensors
$v_1 \otimes v_2 \otimes \cdots \otimes v_k$ with
$v_1,\ldots,v_k \in V$. We equip $V^{\otimes k}$ with the
\emph{Frobenius inner product}, defined by
\begin{equation*}
    \langle A, B \rangle_F
    \coloneqq
    \sum_{i_1,\ldots,i_k \in I_n}
    A(i_1,\ldots,i_k)\,B(i_1,\ldots,i_k),
\end{equation*}
for all $A,B \in V^{\otimes k}$.
The subspace of \emph{alternating arrays}, denoted by
$\Lambda^k V \subseteq V^{\otimes k}$, is defined as follows.

\begin{definition}
\label{def:alternating-array}
A $k$-dimensional array $W \colon I_n^k \to \mathbb{R}$ is called
\emph{alternating} if
\[
    W(i_{\sigma(1)}, i_{\sigma(2)}, \ldots, i_{\sigma(k)})
    = \operatorname{sgn}(\sigma)\,
    W(i_1, i_2, \ldots, i_k)
\]
for every permutation $\sigma \in S_k$ and every choice of indices
$i_1, \ldots, i_k \in I_n$. The collection of all alternating
$k$-dimensional arrays forms a linear subspace of $V^{\otimes k}$,
which we denote by $\Lambda^k V$.
\end{definition}

The space $\Lambda^k V$ of alternating $k$-forms, endowed with the Frobenius
inner product inherited from $V^{\otimes k}$, is an inner product space.
Moreover, if $\{v_1,v_2,\ldots,v_n\}$ is an orthonormal basis of $V$, then the family
\[
\left\{
v_{i_1}\wedge v_{i_2}\wedge\cdots\wedge v_{i_k}
:\;
1\le i_1<i_2<\cdots<i_k\le n
\right\}
\]
is an orthonormal basis of $\Lambda^k V$. Here,
\[
v_{i_1}\wedge v_{i_2}\wedge\cdots\wedge v_{i_k}
=
\frac{1}{\sqrt{k!}}
\sum_{\sigma\in S_k}
(\operatorname{sgn}\sigma)\,
v_{i_{\sigma(1)}}\otimes
v_{i_{\sigma(2)}}\otimes
\cdots\otimes
v_{i_{\sigma(k)}},
\]
where the sum is taken over all permutations $\sigma\in S_k$. We now extend the musical isomorphisms to alternating $k$-forms.
\begin{prop}
\label{prop:musical-generalization}
Let $V \subseteq \mathbb{R}^n$ be a linear subspace and let
$\omega \in \Lambda^k(V^*)$ be an alternating $k$-linear form on $V$.
Then there exists a unique alternating array $W \in \Lambda^k V$ such that
\begin{equation}
\label{eq:musical-iso}
    \omega(v_1,\dots,v_k)
    = \langle W,\, v_1 \otimes \cdots \otimes v_k \rangle_F
    \qquad
    \text{for all } v_1,\dots,v_k \in V.
\end{equation}
\end{prop}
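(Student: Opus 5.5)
The plan is to reduce the statement to the Riesz representation theorem on the finite-dimensional inner product space $V^{\otimes k}$, and then show that the representing array is automatically alternating.

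\emph{Step 1 (extend $\omega$ to the tensor power).} Since $\omega$ is $k$-linear on $V$, the universal property of the tensor product yields a unique linear functional $\tilde\omega$ on $V^{\otimes k}$ with $\tilde\omega(v_1\otimes\cdots\otimes v_k)=\omega(v_1,\dots,v_k)$. Concretely, fix an orthonormal basis $\{e_1,\dots,e_m\}$ of $V$, where $m=\dim V$. The elementary tensors $e_{j_1}\otimes\cdots\otimes e_{j_k}$ are then orthonormal for $\langle\cdot,\cdot\rangle_F$, because the Frobenius inner product factorizes on elementary tensors as $\prod_r\langle u_r,w_r\rangle$. Since they span $V^{\otimes k}$, they form an orthonormal basis, and $\tilde\omega$ is defined on this basis by $\omega(e_{j_1},\dots,e_{j_k})$.

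\emph{Step 2 (Riesz representation).} Define
\[
W=\sum_{j_1,\dots,j_k=1}^{m}\omega(e_{j_1},\dots,e_{j_k})\,e_{j_1}\otimes\cdots\otimes e_{j_k}\in V^{\otimes k}.
\]
Expanding each $v_r=\sum_j\langle v_r,e_j\rangle e_j$ and using the factorization of $\langle\cdot,\cdot\rangle_F$ together with multilinearity of $\omega$ gives \eqref{eq:musical-iso}.

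\emph{Step 3 (alternating property).} Write $P_\sigma$ for the index permutation on arrays. Reindexing the sum defining $W$ by $\sigma$ and using $\omega(e_{j_{\sigma(1)}},\dots)=\operatorname{sgn}(\sigma)\,\omega(e_{j_1},\dots)$ shows that $P_\sigma W=\operatorname{sgn}(\sigma)W$. Hence $W\in\Lambda^kV$.

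\emph{Step 4 (uniqueness).} If $W,W'\in\Lambda^kV\subseteq V^{\otimes k}$ both satisfy \eqref{eq:musical-iso}, then $D=W-W'$ is Frobenius-orthogonal to every elementary tensor of vectors in $V$. These tensors span $V^{\otimes k}$, and $D$ lies in $V^{\otimes k}$, so $\langle D,D\rangle_F=0$ and therefore $D=0$.

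The main obstacle is mostly bookkeeping. First, one must confirm that the factorization $\langle u_1\otimes\cdots\otimes u_k,w_1\otimes\cdots\otimes w_k\rangle_F=\prod_r\langle u_r,w_r\rangle$ holds; this follows directly from the definition of $\langle\cdot,\cdot\rangle_F$ as a sum over indices. Second, one must interpret $\Lambda^kV$ as the alternating arrays lying inside $V^{\otimes k}$. Uniqueness genuinely needs $W\in V^{\otimes k}$: arrays with components outside $V$ would not be detected by tensors of vectors in $V$.
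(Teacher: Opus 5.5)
Your proof is correct and follows essentially the same route as the paper: extend $\omega$ to $V^{\otimes k}$ via the universal property, represent it by some $W\in V^{\otimes k}$, check that $W$ is alternating, and get uniqueness because elementary tensors of vectors in $V$ span $V^{\otimes k}$. The differences are minor: you write the Riesz representative explicitly in an orthonormal basis of $V$ and verify $P_\sigma W=\operatorname{sgn}(\sigma)W$ by reindexing that sum, whereas the paper invokes Riesz abstractly and deduces $W^\sigma=\operatorname{sgn}(\sigma)W$ from uniqueness of the representative.
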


The proof is given in Appendix~\ref{proofprop1}. Applying
Proposition~\ref{prop:musical-generalization} to $V = T_x\mathcal{M}$ at
each point $x \in \mathcal{M}$ yields the following corollary.

\begin{corollary}
\label{cor:musical-manifold}
Let $\mathcal{M} \subseteq \mathbb{R}^n$ be a Riemannian submanifold
endowed with the metric induced by the Euclidean inner product, and let
$\omega$ be a smooth differential $k$-form on $\mathcal{M}$. For every
point $x \in \mathcal{M}$, there exists a unique alternating array
$W_x \in \Lambda^k T_x\mathcal{M}$ such that
\begin{equation}
\label{eq:musical-pointwise}
    \omega_x(v_1,\dots,v_k)
    = \bigl\langle W_x,\, v_1 \otimes \cdots \otimes v_k
      \bigr\rangle_F
\end{equation}
for all $v_1,\dots,v_k \in T_x\mathcal{M}$, where
$\langle \cdot,\cdot\rangle_F$ is the Frobenius inner product on
$(T_x\mathcal{M})^{\otimes k}$ induced by the Euclidean metric.
Moreover, the section $x \mapsto W_x$ is smooth.
\end{corollary}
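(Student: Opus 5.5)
The pointwise existence and uniqueness follow immediately from Proposition~\ref{prop:musical-generalization} applied with $V = T_x\mathcal{M}$ and $\omega = \omega_x \in \Lambda^k(T_x^*\mathcal{M})$. What genuinely needs proof is the smoothness of $x \mapsto W_x$, regarded as a map $\mathcal{M} \to (\mathbb{R}^n)^{\otimes k} \cong \mathbb{R}^{n^k}$. Since smoothness is local, I will work near a fixed point $x_0$ and produce an explicit formula for $W_x$ in terms of a smooth local orthonormal frame.

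First, I choose a local parametrization around $x_0$. The coordinate vector fields $\partial_1,\dots,\partial_d$ are smooth $\mathbb{R}^n$-valued maps on a neighborhood $U$, and they form a basis of $T_x\mathcal{M}$ at each $x\in U$. Applying Gram--Schmidt pointwise gives a smooth orthonormal frame $e_1(x),\dots,e_d(x)$ of $T_x\mathcal{M}\subseteq\mathbb{R}^n$ on $U$. The Gram--Schmidt formulas involve only inner products, divisions by nonvanishing norms and square roots of positive quantities, so they preserve smoothness.

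Second, I write $W_x$ in the orthonormal basis $\{e_{i_1}\wedge\cdots\wedge e_{i_k}\}_{i_1<\cdots<i_k}$ of $\Lambda^k T_x\mathcal{M}$ recalled above:
\[
W_x=\sum_{i_1<\cdots<i_k} c_{i_1\cdots i_k}(x)\, e_{i_1}(x)\wedge\cdots\wedge e_{i_k}(x).
\]
Pairing with $e_{j_1}\otimes\cdots\otimes e_{j_k}$ for $j_1<\cdots<j_k$ and using orthonormality of the $e_i$ in $\mathbb{R}^n$, only the identity permutation survives. With the paper's normalization this gives
\[
\langle e_{i_1}\wedge\cdots\wedge e_{i_k},\, e_{j_1}\otimes\cdots\otimes e_{j_k}\rangle_F=\frac{\delta_{IJ}}{\sqrt{k!}}.
\]
Combining this with \eqref{eq:musical-pointwise} yields
\[
c_{I}(x)=\sqrt{k!}\,\omega_x\bigl(e_{i_1}(x),\dots,e_{i_k}(x)\bigr).
\]
The right-hand side of this formula is an alternating array lying in $\Lambda^k T_x\mathcal{M}$. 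It satisfies \eqref{eq:musical-pointwise} on basis tuples, hence on all tuples by multilinearity and the alternating property. By uniqueness it therefore equals $W_x$.

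Third, I check smoothness. The coefficients $c_I$ are smooth because $\omega$ is a smooth form and the $e_i$ are smooth vector fields. Each array $e_{i_1}\wedge\cdots\wedge e_{i_k}$ has entries that are polynomials in the components of the smooth maps $e_i$. Hence every entry of $W_x$ is smooth on $U$, and covering $\mathcal{M}$ by such neighborhoods finishes the argument.

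The main obstacle is making sure the explicit formula is well defined and matches the abstract element produced by Proposition~\ref{prop:musical-generalization}, and in particular that the normalization constant $\sqrt{k!}$ is consistent. Uniqueness settles this, so the remaining work is routine. The only point needing a remark is that $x\mapsto W_x$ is independent of the chosen frame: this holds because $W_x$ is uniquely determined pointwise, so the local smooth descriptions agree on overlaps.
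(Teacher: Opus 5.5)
Your argument is correct. For existence and uniqueness at each point you do exactly what the paper does: its entire justification is the one-line remark that Proposition~\ref{prop:musical-generalization} is applied with $V=T_x\mathcal{M}$.

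Where you go further is the smoothness clause. The paper states it, and later uses it to call $\sharp$ a vector-bundle isomorphism, without any argument; a pointwise linear isomorphism by itself says nothing about dependence on $x$. Your local Gram--Schmidt frame and the explicit formula $W_x=\sum_{I}\sqrt{k!}\,\omega_x\bigl(e_{i_1}(x),\dots,e_{i_k}(x)\bigr)\,e_{i_1}(x)\wedge\cdots\wedge e_{i_k}(x)$ fill that gap. Your normalization $\langle e_{i_1}\wedge\cdots\wedge e_{i_k},\,e_{j_1}\otimes\cdots\otimes e_{j_k}\rangle_F=\delta_{IJ}/\sqrt{k!}$ is the right one for the paper's $1/\sqrt{k!}$ convention.

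A side benefit is that the formula identifies the coefficients the paper later uses in \eqref{localW}: $f_J(x_i)=\sqrt{k!}\,\omega_{x_i}\bigl(O_{j_1}(x_i),\dots,O_{j_k}(x_i)\bigr)$. If you want a frame-free version, expanding your formula gives $W_x(i_1,\dots,i_k)=\omega_x(P_x\epsilon_{i_1},\dots,P_x\epsilon_{i_k})$. Here $P_x$ is the orthogonal projection onto $T_x\mathcal{M}$ and $\epsilon_1,\dots,\epsilon_n$ is the standard basis of $\mathbb{R}^n$. Smoothness of $W$ then reduces to smoothness of $x\mapsto P_x$, which is itself usually proved with a local frame, so this is only a cosmetic alternative.
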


\begin{definition}[$k$-differential array]
\label{def:k-diff-array}
A \emph{$k$-differential array} on $\mathcal{M}$ is a smooth section of the
bundle $\Lambda^k T\mathcal{M}$ of alternating $k$-arrays, i.e.\ an element
of $\Gamma(\Lambda^k T\mathcal{M})$.
\end{definition}

By Corollary~\ref{cor:musical-manifold}, every differential $k$-form
$\omega$ on $\mathcal{M}$ determines a unique $k$-differential array. This
pointwise correspondence defines a map
\begin{equation}
\label{eq:sharp-map}
    \sharp \colon \Omega^k(\mathcal{M})
    \longrightarrow \Gamma\!\left(\Lambda^k T\mathcal{M}\right),
    \qquad
    \omega \longmapsto W,
\end{equation}
which we call the \emph{musical isomorphism} between differential $k$-forms
and $k$-differential arrays. Since Proposition~\ref{prop:musical-generalization}
provides a fiberwise isomorphism at each $x \in \mathcal{M}$, the map
$\sharp$ is a vector-bundle isomorphism.

Throughout this paper, we identify differential $k$-forms with their
associated $k$-differential arrays via the musical isomorphism $\sharp$.
This identification simplifies the notation and allows all local
computations to be carried out directly in terms of differential arrays.
Accordingly, all subsequent constructions are formulated for
$k$-differential arrays while systematically exploiting this
identification.

The space $\Lambda^k T\mathcal{M}$ of $k$-differential arrays is endowed
with the $L^2$ inner product
\[
\langle W_1, W_2\rangle
=
\int_{\mathcal{M}}
\langle W_1(x), W_2(x)\rangle_F \, d\mathrm{Vol}(x),
\]
for any $W_1,W_2\in\Lambda^k T\mathcal{M}$, where
$\langle\cdot,\cdot\rangle_F$ denotes the Frobenius inner product on the
fiber and $d\mathrm{Vol}(x)$ is the Riemannian volume form on
$\mathcal{M}$.

\section{ Ambient Connection Laplacian}
\label{sec:infinitesimal_approx_connection_laplacian}

Recall from Section~\ref{formasdiferenciables} that a differential $k$-array is a smooth map $\omega \colon \mathcal{M} \to \mathbb{R}^{n^{k}}$ such that, for every $x \in \mathcal{M}$, the value $\omega(x)$ belongs to the tensor subspace $(T_x\mathcal{M})^{\otimes k} \subset (\mathbb{R}^n)^{\otimes k} \cong \mathbb{R}^{n^k}$. Here, the inclusion is induced by the differential of the isometric embedding $\mathcal{M} \hookrightarrow \mathbb{R}^n$. Via the metric-induced musical isomorphisms introduced in Section~\ref{formasdiferenciables}, these arrays are naturally identified with differential $k$-forms.

To study these differential arrays within the ambient space $\mathbb{R}^{n^k}$, we introduce the \emph{ambient connection Laplacian}. This operator acts on sections of the trivial bundle $E = \mathcal{M} \times \mathbb{R}^{n^k}$, which is equipped with the flat connection $\bar{\nabla}$ inherited from the standard Euclidean structure on $\mathbb{R}^{n^k}$. For a smooth section $f \in \Gamma(E) \cong C^{\infty}(\mathcal{M}, \mathbb{R}^{n^k})$, its ambient connection Laplacian is defined as the metric trace of its second covariant derivative:
\begin{equation}
    \Delta_{\bar{\nabla}} f := \operatorname{tr}_{g}(\bar{\nabla}^{2} f).
    \label{eq:ambient_laplacian}
\end{equation}
Here, $\bar{\nabla}^{2} f \in \Gamma(T^{*}\mathcal{M} \otimes T^{*}\mathcal{M} \otimes E)$ is evaluated with respect to both the Levi-Civita connection $\nabla$ on $\mathcal{M}$ and the flat connection $\bar{\nabla}$ on $E$.

Because the metric on $\mathcal{M}$ is induced by the ambient Euclidean inner product and $\bar{\nabla}$ is the flat Euclidean connection, this ambient connection Laplacian acts strictly componentwise. Consequently, for any smooth map $f = (f_{1}, \ldots, f_{n^{k}}) \colon \mathcal{M} \to \mathbb{R}^{n^{k}}$, we have
\[
    \Delta_{\bar{\nabla}}f = \left( \Delta_{\mathcal{M}}f_{1}, \ldots, \Delta_{\mathcal{M}}f_{n^{k}} \right),
\]
where $\Delta_{\mathcal{M}}$ denotes the scalar Laplace--Beltrami operator on $\mathcal{M}$. 

This resulting Laplacian elegantly encodes both the intrinsic geometry of $\mathcal{M}$ and the extrinsic geometry of the embedding $\mathcal{M} \hookrightarrow \mathbb{R}^n$. Note that while $f(x)$ may lie entirely within the tangent spaces $(T_x\mathcal{M})^{\otimes k}$, the evaluation of $\Delta_{\bar{\nabla}}f$ will generally produce vectors with non-trivial normal components relative to the manifold, explicitly reflecting its extrinsic nature.

We now derive an infinitesimal approximation of the ambient connection Laplacian acting on vector-valued functions. The construction is based on a diffusion-type integral operator associated with a smooth sampling density $q$ on the manifold $\mathcal{M}$. This formulation is particularly important because it naturally leads to a data-driven approximation of the ambient connection Laplacian acting on differential arrays.

Let $G_t:\mathcal{M}\times\mathcal{M}\to\mathbb{R}$ denote the Gaussian kernel
\[ G_t(x,y) = \exp\left( -\frac{|x-y|^2}{2t^2} \right), \]
where $|\cdot|$ denotes the Euclidean norm in the ambient space. Assume that $q\in C^\infty(\mathcal{M})$ is a strictly positive probability density on $\mathcal{M}$. We define the normalization function
\begin{equation} \label{eq:normalization_function}
d_t(x) = \int_{\mathcal{M}} G_t(x,y) q(y) \, d\mathrm{Vol}(y),
\end{equation}
where $d\mathrm{Vol}$ denotes the Riemannian volume measure on $\mathcal{M}$.

Using this normalization, we define the integral operator $\mathbf{P}_t$ acting on smooth vector-valued functions $f:\mathcal{M}\to\mathbb{R}^{n^k}$ by
\begin{equation} \label{operadorP}
\mathbf{P}_t f(x) = \frac{1}{d_t(x)} \int_{\mathcal{M}} G_t(x,y) f(y) q(y) \, d\mathrm{Vol}(y),
\end{equation}
where the integral is understood componentwise. The associated infinitesimal generator is defined as follows.

\begin{definition}[Infinitesimal Generator] \label{def:infinitesimal_generator}
Let $\mathbf{P}_t$ be the operator defined in \eqref{operadorP}. The infinitesimal generator $L_t$ is given by
\[ L_t f(x) := \frac{f(x)-\mathbf{P}_t f(x)}{t^2}, \]
for every smooth vector-valued function $f:\mathcal{M}\to\mathbb{R}^{n^k}$.
\end{definition}

The operator $L_t$ can be viewed as a vector-valued extension of the infinitesimal generator arising in the diffusion maps framework. Indeed, since $\mathbf{P}_t$ acts componentwise on the coordinates of $f$, the asymptotic analysis developed for scalar diffusion operators applies independently to each component. Consequently, by the classical diffusion maps expansion of Coifman and Lafon \cite{coifman2006diffusion}, one obtains
\[ \lim_{t\to 0^+} L_t f(x) = \frac{1}{2}\Delta_{\nabla}f(x), \]
where $\Delta_{\nabla}$ denotes the ambient connection Laplacian. Therefore, $\mathbf{P}_t$ provides a diffusion semigroup whose infinitesimal generator converges to the ambient connection Laplacian in the limit $t\to 0$.

This characterization immediately suggests a discrete approximation. Let $\{x_1,\ldots,x_N\} \subset \mathcal{M}$ be independent samples drawn according to the density $q$. By the law of large numbers, the integral operator $\mathbf{P}_t$ can be approximated by the discrete operator
\[ \mathbf{P}^{\mathrm{dis}}_t f(x_i) = \frac{1}{d_t^{\mathrm{dis}}(x_i)} \sum_{j=1}^{N} G_t(x_i,x_j) f(x_j), \]
where
\[ d_t^{\mathrm{dis}}(x_i) = \sum_{j=1}^{N} G_t(x_i,x_j). \]

The corresponding discrete infinitesimal generator is given by
\begin{equation}
\label{discreteinfgen}
 L_t^{\mathrm{dis}}f(x_i) = \frac{f(x_i)-\mathbf{P}^{\mathrm{dis}}_t f(x_i)}{t^2}.   
\end{equation}

A particularly important case arises when the samples are drawn from a uniform distribution on the manifold, that is, when $q(x) = 1 / \mathrm{Vol}(\mathcal{M})$. In this setting, Singer \cite{singer2006graph} established the pointwise convergence rate for the discrete generator. Letting $d$ denote the intrinsic dimension of the manifold $\mathcal{M}$, the convergence rate is given by
\begin{equation}
\label{convergencerate}
    L_t^{\mathrm{dis}}f(x_i) = \frac{1}{2}\Delta_{\nabla}f(x_i) + \mathcal{O}\left(\frac{1}{\sqrt{N} t^{(d+2)/4}}\right) + \mathcal{O}(t).
\end{equation}

Balancing the variance and bias error terms leads to the optimal choice for the bandwidth parameter $t$, as derived in \cite{singer2006graph}:
\begin{equation}
\label{parametroinfinite}
    t = \frac{1}{N^{2/(d+6)}}.
\end{equation}

\subsection{Projected Ambient Connection Laplacian on Differential Arrays}
\label{sec:conexiondifferentialforms}

Recall that for a smooth differential $k$-array
$\omega \colon \mathcal{M}\to\mathbb{R}^{n^k}$, the ambient connection
Laplacian defines a smooth map
$\Delta_{\nabla}\omega\colon\mathcal{M}\to\mathbb{R}^{n^k}$.
In general, however, $\Delta_{\nabla}\omega$ does not preserve the space of
differential $k$-arrays, and hence it is not an endomorphism of this bundle.

To overcome this difficulty, we project the ambient operator onto the bundle
$\Lambda^kT\mathcal{M}$. More precisely, for every
$x\in\mathcal{M}$, we define the \emph{projected ambient connection Laplacian}
by
\begin{equation}
\label{proyeLaplac}
(\tilde{\Delta}_{\nabla}\omega)(x)
:=
\mathbf{P}_{\Lambda^kT_x\mathcal{M}}
\bigl(\Delta_{\nabla}\omega(x)\bigr),
\end{equation}
where
$\mathbf{P}_{\Lambda^kT_x\mathcal{M}}$
denotes the orthogonal projection onto
$\Lambda^kT_x\mathcal{M}$.

By construction, $\tilde{\Delta}_{\nabla}$ is a well-defined operator on
differential $k$-arrays. Nevertheless, it should not be identified with the
intrinsic connection Laplacian $\nabla^{*}\nabla$ acting on differential
forms. Indeed, if $M^d\hookrightarrow\mathbb{R}^N$ is an isometrically embedded
Riemannian manifold, then differentiation in the ambient Euclidean space
followed by orthogonal projection onto the tangent bundle introduces an
additional zero-order operator determined by the second fundamental form of
the embedding. More precisely,
\[
\tilde{\Delta}_{\nabla}
=
\nabla^{*}\nabla-\mathcal{A},
\]
where
\[
\mathcal{A}
=
\sum_{\alpha=1}^{N-d}S_{\alpha}^{\,2},
\]
and $S_{\alpha}$ denotes the shape operator associated with the $\alpha$-th
unit normal vector field. Therefore, the projected ambient connection
Laplacian differs from the intrinsic connection Laplacian only by an
extrinsic zero-order curvature operator.

This identity naturally relates $\tilde{\Delta}_{\nabla}$ to the
Hodge--de~Rham Laplacian. By the classical Weitzenb\"ock formula
\cite{rosenberg1997laplacian,jost2011riemannian},
\[
\Delta_H=\nabla^{*}\nabla+\mathcal{R},
\]
where $\mathcal{R}$ is the intrinsic Weitzenb\"ock curvature endomorphism.
Combining both identities yields the exact relation
\[
\boxed{
\Delta_H
=
\tilde{\Delta}_{\nabla}
+\mathcal{A}
+\mathcal{R},
}
\]
Hence, the projected ambient connection Laplacian and the Hodge Laplacian
differ exclusively by explicit zero-order operators, namely the intrinsic
curvature term $\mathcal{R}$ and the extrinsic contribution $\mathcal{A}$
induced by the embedding.

Consequently, combining the diffusion-map approximation developed in the
previous section with the projected operator above yields a discrete operator
whose orthogonal projection is well defined on differential arrays and,
through the musical isomorphisms, approximates the Hodge Laplacian up to an
explicit zero-order perturbation determined by the intrinsic and extrinsic
geometry of the manifold.

\section{Matrix-Based Computations}

Building upon the discretization of the infinitesimal generator of the ambient
connection Laplacian $\Delta_{\nabla}$ introduced in
\eqref{discreteinfgen}, together with the discussion in
Section~\ref{sec:conexiondifferentialforms} concerning its action on
differential forms, we now derive a matrix-based, data-driven representation
of the projected ambient connection Laplacian. Recall that the ambient
connection Laplacian is projected onto the space of differential
$k$-forms according to
\begin{equation}
    (\tilde{\Delta}_{\nabla}\omega)(x)
    :=
    \mathbf{P}_{\Lambda^k T_x\mathcal{M}}
    \bigl(\Delta_{\nabla}\omega(x)\bigr),
\end{equation}
where
$\mathbf{P}_{\Lambda^k T_x\mathcal{M}}$
denotes the orthogonal projection onto
$\Lambda^kT_x\mathcal{M}$. This projection is realized through the
musical isomorphisms and defines a smooth linear operator on the space
$\Omega^k(\mathcal{M})$ of differential $k$-forms. Furthermore, as
discussed in Section~\ref{sec:conexiondifferentialforms}, the projected
ambient connection Laplacian $\tilde{\Delta}_{\nabla}$ can be expressed as
the Hodge Laplacian together with a  perturbation term.

The objective of this section is to construct a local, data-driven matrix
representation of the projected operator
$\tilde{\Delta}_{\nabla}$ using only point-cloud data. This matrix
formulation provides a practical computational framework for approximating the
action of the projected ambient connection Laplacian on differential forms from
discrete samples of the underlying manifold.

To this end, we first introduce a matrix representation of differential arrays,
which, through the musical isomorphisms, are naturally identified with
differential forms. This identification enables us to represent differential
forms as matrix-valued objects while preserving their underlying geometric
structure. Building upon this representation, we derive a corresponding
matrix formulation of the projected ambient connection Laplacian
$\tilde{\Delta}_{\nabla}$. Throughout, the equivalence between the matrix
representation and the action of $\tilde{\Delta}_{\nabla}$ on differential forms
is understood via the musical isomorphisms.

The following data-driven construction of differential arrays was originally
introduced in \cite{almeida6959520local}. For completeness, we briefly recall
this construction, as it provides the foundation for the matrix-based
representation and computation of the projected ambient connection Laplacian.

\subsection{Matrix representation of differential arrays}
\label{sec:matrixdifarrays}

We begin by constructing a data-driven representation  of
$k$-differential arrays $\Gamma(\Lambda^k T\mathcal{M})$ from a finite collection of
$N$ sample points $X = \{x_1, x_2, \dots, x_N\}$. These points are assumed to
be realizations of $N$ independent and identically distributed (i.i.d.)
random variables $X_1, X_2, \dots, X_N$, drawn from a smooth probability
density $q(\cdot)$ supported on a $d$-dimensional compact Riemannian manifold
$\mathcal{M}$. 

For that, we begin with the local construction of
$k$-differential arrays. Given a $k$-differential array $W$, its evaluation
at a sample point $x_i$ can be written as
\begin{equation}
\label{localW}
   W(x_i) = \sum_{J} f_{J}(x_i)\, O_{J}(x_i),
\end{equation}
where the sum ranges over all ordered $k$-tuples $J = (j_1, j_2, \dots, j_k)$
with $1 \le j_1 < j_2 < \cdots < j_k \le d$. Here, $f_{J}(x_i)$ are
real-valued coefficient functions, and $O_J(x_i)$ denotes an element of the
orthonormal basis of $\Lambda^k T_{x_i}\mathcal{M}$ (see Section \ref{formasdiferenciables}),
defined via the wedge product
\begin{equation}
\label{definicionnuevaO}
    O_{J}(x_i) = O_{j_1}(x_i) \wedge \cdots \wedge O_{j_k}(x_i),
\end{equation}
where $\{O_{1}(x_i), \dots, O_{d}(x_i)\}$ is an orthonormal basis for the
tangent space $T_{x_i}\mathcal{M}$. In the proposed methodology, we assume
that orthonormal bases for the tangent spaces $\{T_{x_i}\mathcal{M}\}_{i=1}^{N}$
and the intrinsic dimension $d$ of the manifold $\mathcal{M}$ are either
known \emph{a priori}, or estimated via local principal component analysis
(PCA). In the latter case, we follow the approach
of~\cite{singer2012vector, singer2011orientability}, which constructs an
orthonormal basis for each tangent space $T_{x_i}\mathcal{M}$, together with
the maximum likelihood estimator of~\cite{levina2004maximum} for the
intrinsic dimension $d$. We stress, however, that the contribution of the
present section lies in exploiting these orthonormal bases to derive a
matrix formulation that approximates the Ambient Connection Laplacian. The
estimation of the tangent spaces themselves is a preprocessing step and is
not the focus of this work.

Using the local representation in \eqref{localW}, we discretize a
$k$-differential array $W \in \Gamma(\Lambda^k T\mathcal{M})$ by
representing it in matrix form as
\[
\mathbf{O}_k * \mathbf{f},
\]
where the product $*$ denotes the standard matrix multiplication. The matrices
$\mathbf{O}_k$ and $\mathbf{f}$ are defined as follows.\\
\textbf{Definition of $\mathbf{f}$:} The matrix $\mathbf{f}$ consists of $N$
blocks, each of size $\binom{d}{k}\times 1$. The $i$-th block is given by  
\begin{equation*}
    \mathbf{f}(i)=\begin{bmatrix}
f_{J_1}(x_i)\\
f_{J_2}(x_i)\\
\vdots\\
f_{J_{\binom{d}{k}}}(x_i)
\end{bmatrix},
\end{equation*}
where the multi-indices $J_1, J_2, \dots, J_{\binom{d}{k}}$ correspond to all
possible $k$-tuples
$$J_l=(j^l_1, \dots, j^l_k), \qquad 1 \le j^l_1 < \dots < j^l_k \le d.$$ 
Thus, the full matrix $\mathbf{f}$ has size $\binom{d}{k} N\times 1$.\\
\textbf{Definition of $\mathbf{O}_k$:} The matrix $\mathbf{O}_k$ consists of
$N\times N$ blocks, each of size $n^k\times \binom{d}{k}$. The block at
position $(i,j)$ is defined as
\begin{equation}
\label{def:negritaO}
    \mathbf{O}_k(i,j)= \begin{cases}
			\overline{O}_{k}(i)  & \text{if } i=j, \\
            0_{ n^k\times \binom{d}{k}}, & \text{if } i \neq j,
		 \end{cases}
\end{equation}
for $i,j\in\{1,\dots,N\}$, so that $\mathbf{O}_k$ is block-diagonal. Here,
$\overline{O}_{k}(i)$ is the matrix
\begin{equation*}
   \overline{O}_{k}(i)=\begin{bmatrix} O_{J_1}(x_i) & O_{J_2}(x_i) & \cdots & O_{J_{\binom{d}{k}}}(x_i)
   \end{bmatrix},
\end{equation*}
where each $O_{J_l}(x_i)$ is defined as in \cref{definicionnuevaO} and is
regarded as a column vector embedded in $\R^{n^k}$. Overall, $\mathbf{O}_k$
has size $n^k N\times \binom{d}{k} N$. The values of the $k$-differential
array $W(x_i)$ correspond to the $i$-th block of the product
$\mathbf{O}_k * \mathbf{f}$.

\subsection{Matrix representation of the projected discrete infinitesimal generator}
\label{matrixrepresentacionC}

We now derive a matrix representation of the projected approximation of the
ambient connection Laplacian
$(\tilde{\Delta}_{\nabla}\omega)(x)$ introduced in
Equation~\eqref{proyeLaplac}, namely,
\begin{equation*}
    (\tilde{\Delta}_{\nabla}W)(x)
    :=
    \mathbf{P}_{\Lambda^k T_x\mathcal{M}}
    \bigl(\Delta_{\nabla}W(x)\bigr),
\end{equation*}
where
$\mathbf{P}_{\Lambda^k T_x\mathcal{M}}$
denotes the orthogonal projection onto
$\Lambda^kT_x\mathcal{M}$. The construction is based on the observation that the projected
infinitesimal generator
\[
2*\mathbf{P}_{\Lambda^k T_x\mathcal{M}}
\bigl(L_t W(x)\bigr),
\]
where $L_t$ is the infinitesimal generator introduced in
Definition~\ref{def:infinitesimal_generator},
approximates the projected connection Laplacian
\[
\mathbf{P}_{\Lambda^k T_x\mathcal{M}}
\bigl(\Delta_{\nabla}W(x)\bigr),
\]
as established in
Section~\ref{sec:infinitesimal_approx_connection_laplacian}.

Throughout this subsection, let
$\{x_1,\ldots,x_N\}\subset\mathcal{M}$
be independent samples drawn from a smooth probability density
$q$ on $\mathcal{M}$.
Our goal is to construct a matrix representation of the  data-driven discrete
infinitesimal generator
$L_t^{\mathrm{dis}}$, of $L_t^t$ introduced in
Equation~\eqref{discreteinfgen}, and subsequently obtain a matrix
approximation of the projected ambient connection Laplacian defined over differential forms.

First, we assume that $W$ is a differential array described as equation \eqref{localW},  and we can write its evaluation at $x_i$ as
\begin{equation*}
   W(x_i) = \sum_{J} f_{J}(x_i)\, O_{J}(x_i),
\end{equation*}
where the sum ranges over all ordered $k$-tuples $J = (j_1, j_2, \dots, j_k)$
with $1 \le j_1 < j_2 < \cdots < j_k \le d$. and, $f_{J}(x_i)$ are
real-valued coefficient functions, and $O_J(x_i)$ denotes the wedge product
 via the wedge product
\begin{equation}
    O_{J}(x_i) = O_{j_1}(x_i) \wedge \cdots \wedge O_{j_k}(x_i),
\end{equation}
where $\{O_{1}(x_i), \dots, O_{d}(x_i)\}$ is an orthonormal basis for the
tangent space $T_{x_i}\mathcal{M}$.

Using this notation, the discrete infinitesimal generator can be written as
\begin{equation*}
    L_t^{\mathrm{dis}}W(x_i)
    =
    \frac{1}{t^2\,d_t^{\mathrm{dis}}(x_i)}
    \sum_{j=1}^{N}
    G_t(x_i,x_j)\bigl(W(x_i)-W(x_j)\bigr),
\end{equation*}
where
\begin{equation*}
    d_t^{\mathrm{dis}}(x_i)
    :=
    \sum_{j=1}^{N}
    G_t(x_i,x_j).
\end{equation*}

Therefore, the projection of the discrete infinitesimal generator onto
$\Lambda^k T_{x_i}\mathcal{M}$ is given by
\begin{equation}
\label{eqproye1}
    \mathbf{P}_{\Lambda^k T_{x_i}\mathcal{M}}
    \bigl(L_t^{\mathrm{dis}}W(x_i)\bigr)
    =
    \frac{1}{t^2\,d_t^{\mathrm{dis}}(x_i)}
    \sum_{j=1}^{N}
    G_t(x_i,x_j)\,
    \mathbf{P}_{\Lambda^k T_{x_i}\mathcal{M}}
    \bigl(W(x_i)-W(x_j)\bigr).
\end{equation}

Now suppose that $W$ is represented locally as in Equation~\eqref{localW}. Since
$\{O_L(x_i)\}_L$ forms an orthonormal basis of
$\Lambda^k T_{x_i}\mathcal{M}$, the orthogonal projection of
$W(x_j)$ onto $\Lambda^k T_{x_i}\mathcal{M}$ is given by
\begin{equation}\label{eqproye2}
    \mathbf{P}_{\Lambda^k T_{x_i}\mathcal{M}}
    \bigl(W(x_j)\bigr)
    =
    \sum_{L}
    \left\langle
        W(x_j),\, O_L(x_i)
    \right\rangle_{F}
    O_L(x_i),
\end{equation}
where the sum is taken over all multi-indices
$L=(l_1,l_2,\ldots,l_k)$ satisfying
\[
1 \le l_1 < l_2 < \cdots < l_k \le d.
\]

On the other hand, using the local representation \eqref{localW}, we obtain
\begin{equation*}
    \left\langle
        W(x_j),\, O_L(x_i)
    \right\rangle_{F}
    =
    \sum_{J}
    f_J(x_j)\,
    \left\langle
        O_J(x_j),\, O_L(x_i)
    \right\rangle_{F},
\end{equation*}
where the sum is taken over all multi-indices
\[
J=(j_1,\ldots,j_k),
\qquad
1\le j_1<\cdots<j_k\le d.
\]
Furthermore, by the definition of the Frobenius inner product on alternating arrays,
\begin{equation*}
\left\langle
O_J(x_j),\, O_L(x_i)
\right\rangle_{F}
=
\frac{1}{k!}
\sum_{\alpha,\beta\in S_k}
\operatorname{sgn}(\alpha)\,
\operatorname{sgn}(\beta)
\prod_{r=1}^{k}
\left\langle
O_{j_{\alpha(r)}}(x_j),
O_{l_{\beta(r)}}(x_i)
\right\rangle,
\end{equation*}
where $S_k$ denotes the symmetric group on $\{1,\ldots,k\}$. For convenience, define the matrix
\[
A^{L,J}(i,j)
=
\left(
\left\langle
O_{l_p}(x_i),
O_{j_q}(x_j)
\right\rangle
\right)_{p,q=1}^{k}.
\]
Then
\begin{equation*}
\left\langle
O_J(x_j),\, O_L(x_i)
\right\rangle_{F}
=
\frac{1}{k!}
\sum_{\alpha,\beta\in S_k}
\operatorname{sgn}(\alpha)\,
\operatorname{sgn}(\beta)
\prod_{r=1}^{k}
A^{L,J}_{\beta(r),\alpha(r)}(i,j).
\end{equation*}
Introducing the permutation
\[
\gamma=\beta\circ\alpha^{-1},
\]
and using the multiplicativity of the signature,
\[
\operatorname{sgn}(\beta)
=
\operatorname{sgn}(\gamma)\operatorname{sgn}(\alpha),
\]
we obtain
\begin{align*}
\left\langle
O_J(x_j),\, O_L(x_i)
\right\rangle_{F}
&=
\frac{1}{k!}
\sum_{\alpha,\gamma\in S_k}
\operatorname{sgn}(\gamma)
\prod_{r=1}^{k}
A^{L,J}_{\gamma(r),r}(i,j)\\
&=
\sum_{\gamma\in S_k}
\operatorname{sgn}(\gamma)
\prod_{r=1}^{k}
A^{L,J}_{\gamma(r),r}(i,j)\\
&=
\det\!\bigl(A^{L,J}(i,j)\bigr).
\end{align*}
The second equality follows because the summand is independent of $\alpha$, so the summation over $\alpha$ contributes a factor of $k!$, which cancels the prefactor $1/k!$. The final equality is precisely the Leibniz formula for the determinant. Consequently,
\begin{equation}
\label{expprod}
\left\langle
W(x_j),\, O_L(x_i)
\right\rangle_{F}
=
\sum_{J}
f_J(x_j)\,
\det\!\bigl(A^{L,J}(i,j)\bigr).
\end{equation}

Observe that
\[
A^{L,J}(i,j)
=
\bigl(O^{L}(x_i)\bigr)^T
O^{J}(x_j),
\]
where, for every multi-index
\[
M=(m_1,\ldots,m_k),
\]
the matrix
\[
O^{M}(x_l)\in\mathbb{R}^{d\times k}
\]
is the submatrix of
\[
O(x_l)
=
\bigl[
O_1(x_l),\ldots,O_d(x_l)
\bigr]
\]
formed by the columns indexed by $m_1,\ldots,m_k$, namely,
\[
O^{M}(x_l)
=
\bigl[
O_{m_1}(x_l),\,
O_{m_2}(x_l),\,
\ldots,\,
O_{m_k}(x_l)
\bigr].
\]
Combining \eqref{eqproye1}, \eqref{eqproye2}, and \eqref{expprod} yields
\begin{equation}
\label{expansiongrande}
\begin{aligned}
\mathbf{P}_{\Lambda^kT_{x_i}\mathcal{M}}
\bigl(L_t^{\mathrm{dis}}W(x_i)\bigr)
&=
\frac{1}{t^2}
\sum_{L}
\left(
\sum_{j=1}^{N}
\frac{G_t(x_i,x_j)}
{d_t^{\mathrm{dis}}(x_i)}
\sum_{J}
f_L(x_i)-
f_J(x_j)
\det\!\bigl(A^{L,J}(i,j)\bigr)
\right)
O_L(x_i).
\end{aligned}
\end{equation}

To rewrite \eqref{expansiongrande} in matrix form, we use the representation

\begin{equation*}
   W=\mathbf{O}_k*\mathbf{f} 
\end{equation*}
introduced in Section~\ref{sec:matrixdifarrays}, where $*$ denotes standard matrix multiplication. This observation motivates the following matrix representation of the projected discrete ambient connection Laplacian:
\begin{equation}
\label{matrixambientlaplacian}
\mathbf{P}_{\Lambda^kT\mathcal{M}}
\bigl(L_t^{\mathrm{dis}}W\bigr)
:=
\frac{1}{t^2}
\mathbf{O}_k
*
(\mathbf{Id}-\mathbf{CL})
*
\mathbf{f},
\end{equation}
where $\mathbf{CL}$ and $\mathbf{Id}$ are block matrices of size $N\times N$, whose blocks belong to
$\mathbb{R}^{\binom{d}{k}\times\binom{d}{k}}$. Consequently, both matrices have overall dimension
\[
N\binom{d}{k}\times N\binom{d}{k}.
\]

Let
\[
J_1,\ldots,J_{\binom{d}{k}}
\]
be the ordered collection of all multi-indices of length $k$. Then, for
$i,j=1,\ldots,N$, the $(i,j)$-th block of $\mathbf{CL}$ is
\begin{equation}
\label{eq:blockCL}
\mathbf{CL}(i,j)
=
\frac{G_t(x_i,x_j)}
{d_t^{\mathrm{dis}}(x_i)}
\left[
\det\!\bigl(A^{J_p,J_q}(i,j)\bigr)
\right]_{p,q=1}^{\binom{d}{k}},
\end{equation}
that is,
\[
\mathbf{CL}(i,j)_{pq}
=
\frac{G_t(x_i,x_j)}
{d_t^{\mathrm{dis}}(x_i)}
\det\!\bigl(A^{J_p,J_q}(i,j)\bigr),
\qquad
1\le p,q\le\binom{d}{k}.
\]
Finally, let $\mathbf{Id}$ denote the identity matrix of size
$N\binom{d}{k}\times N\binom{d}{k}$.

Recall that $\mathbf{O}_k$, defined in Equation~\eqref{def:negritaO}, is a
block-diagonal matrix whose diagonal blocks consist of the orthonormal bases of
$\Lambda^kT_{x_i}\mathcal{M}$ written as columns. Consequently,
$\mathbf{O}_k$ is an orthogonal matrix, that is,
\[
\mathbf{O}_k^{T}\mathbf{O}_k=\mathbf{Id}.
\]
Therefore, if
\[
W=\mathbf{O}_k*\mathbf{f},
\]
then
\[
\mathbf{O}_k^{T}W
=
\mathbf{O}_k^{T}\mathbf{O}_k*\mathbf{f}
=
\mathbf{f}.
\]
Substituting this identity into Equation~\eqref{matrixambientlaplacian}, we obtain
\begin{equation}
\label{matrixlaplacianocompleta}
\mathbf{P}_{\Lambda^kT\mathcal{M}}
\bigl(L_t^{\mathrm{dis}}W\bigr)
=
\frac{1}{t^2}
\mathbf{O}_k
*
(\mathbf{Id}-\mathbf{CL})
*
\mathbf{O}_k^{T}W.
\end{equation}
Hence, the matrix representation of the projected discrete ambient connection
Laplacian is given by
\[
\frac{1}{t^2}\,
\mathbf{O}_k
*
(\mathbf{Id}-\mathbf{CL})
*
\mathbf{O}_k^{T}.
\]

Observe that the spectrum of the matrix
$\mathbf{Id}-\mathbf{CL}$ differs from that of $\mathbf{CL}$ only by a
uniform shift of $-1$ in each eigenvalue. Therefore, the spectral
properties of the projected discrete ambient connection Laplacian are
completely determined by the matrix $\mathbf{CL}$. Consequently,
$\mathbf{CL}$ is the matrix that encodes the intrinsic information of the
projected ambient connection Laplacian acting on differential forms.

Finally, we summarize the construction of the matrix $\mathbf{CL}$,
together with the selection of the infinitesimal parameter discussed in
Equation~\eqref{parametroinfinite}, in
Algorithm~\ref{algoHCLMatrix}.

\begin{algorithm}[H]
\caption{Construction of the matrix $\mathbf{CL}$}
\label{algoHCLMatrix}

\textbf{Input:}
A data set
\[
X=\{x_1,x_2,\ldots,x_N\},
\]
the manifold dimension $d$, and an orthonormal basis of
$T_{x_i}\mathcal{M}$ for each $x_i$, $i=1,\ldots,N$.

\begin{enumerate}

\item Compute the infinitesimal parameter
\[
t=\frac{1}{N^{2/(d+6)}}.
\]

\item Initialize $\mathbf{CL}$ as the zero block matrix of size
$N\times N$.

\item For $i=1,\ldots,N$:
    \begin{enumerate}
        \item For $j=1,\ldots,N$:
        \begin{enumerate}
            \item Compute the block
            $\mathbf{CL}(i,j)$ according to
            Equation~\eqref{eq:blockCL}.
        \end{enumerate}
    \end{enumerate}

\item Return the matrix $\mathbf{CL}$.

\end{enumerate}

\end{algorithm}

Finally, we establish that the matrix representation of the proposed
projected ambient connection Laplacian is independent of the particular
choice of local orthonormal bases up to orthogonal similarity.

Let
\[
\{O_{1}(x_i),\ldots,O_{d}(x_i)\}
\quad\text{and}\quad
\{\widehat{O}_{1}(x_i),\ldots,\widehat{O}_{d}(x_i)\}
\]
be two orthonormal bases of $T_{x_i}\mathcal{M}$ for each sample point
$x_i$. Let $\mathbf{CL}$ and $\widehat{\mathbf{CL}}$ denote the matrices
constructed from these two choices of local bases through
Equation~\eqref{matrixambientlaplacian}, and let
$\mathbf{O}_k$ and $\widehat{\mathbf{O}}_k$ be the corresponding block-diagonal
matrices defined in Equation~\eqref{def:negritaO}.

Since both constructions represent the same projected discrete ambient
connection Laplacian, we have
\begin{equation}
\mathbf{P}_{\Lambda^kT\mathcal{M}}
\bigl(L_t^{\mathrm{dis}}W\bigr)
=
\frac{1}{t^2}
\mathbf{O}_k
(\mathbf{Id}-\mathbf{CL})
\mathbf{O}_k^{T}W
=
\frac{1}{t^2}
\widehat{\mathbf{O}}_k
(\mathbf{Id}-\widehat{\mathbf{CL}})
\widehat{\mathbf{O}}_k^{T}W.
\end{equation}

Multiplying on the left by $\mathbf{O}_k^{T}$ and on the right by
$\mathbf{O}_k$, and using the orthogonality of $\mathbf{O}_k$, gives
\begin{equation}
\label{eqexpaorto}
\mathbf{Id}-\mathbf{CL}
=
(\mathbf{O}_k^{T}\widehat{\mathbf{O}}_k)
(\mathbf{Id}-\widehat{\mathbf{CL}})
(\mathbf{O}_k^{T}\widehat{\mathbf{O}}_k)^{T}.
\end{equation}

Define
\[
\mathbf{Q}:=\mathbf{O}_k^{T}\widehat{\mathbf{O}}_k.
\]
Since both $\mathbf{O}_k$ and $\widehat{\mathbf{O}}_k$ are block-diagonal,
$\mathbf{Q}$ is also block-diagonal. Moreover, for each
$i=1,\ldots,N$, the corresponding diagonal blocks of
$\mathbf{O}_k$ and $\widehat{\mathbf{O}}_k$ consist of orthonormal bases
of the same vector space $\Lambda^k(T_{x_i}\mathcal{M})$. Hence, each
diagonal block of $\mathbf{Q}$ is the orthogonal change-of-basis matrix
between these two bases. Therefore, every diagonal block of
$\mathbf{Q}$ is orthogonal, and consequently $\mathbf{Q}$ itself is
orthogonal.

Substituting this identity into Equation~\eqref{eqexpaorto} yields
\[
\mathbf{Id}-\mathbf{CL}
=
\mathbf{Q}
(\mathbf{Id}-\widehat{\mathbf{CL}})
\mathbf{Q}^{T}.
\]
Since the identity matrix is invariant under orthogonal similarity, it
follows that
\begin{equation}
\label{CLsimilaridadorto}
\mathbf{CL}
=
\mathbf{Q}\,
\widehat{\mathbf{CL}}\,
\mathbf{Q}^{T}.
\end{equation}

Therefore, the matrix $\mathbf{CL}$ is uniquely determined up to
orthogonal similarity. Consequently, all quantities invariant under
similarity transformations, including its spectrum, characteristic
polynomial, trace, determinant, and eigenvalue multiplicities, are
independent of the particular choice of local orthonormal bases.
Although the matrix representation depends on the selected bases, these
spectral invariants are intrinsic to the projected ambient connection
Laplacian. In particular, the spectrum of $\mathbf{CL}$ is an intrinsic
geometric invariant of the proposed discrete operator.

\section{Numerical Approximation of the Heat Flow Generated by the Projected Ambient Connection Laplacian}
\label{ecuacioncalor}

In the previous section, we introduced a data-driven matrix representation of
the projected ambient connection Laplacian acting on differential forms.
We now employ this discrete operator to construct a numerical approximation of
the corresponding heat flow on a smooth manifold.

Throughout this section, differential forms are identified with differential
arrays through the canonical correspondence induced by the musical isomorphism
defined in \eqref{eq:sharp-map}. Accordingly, differential forms are used for
the theoretical analysis, whereas differential arrays are employed for the
numerical implementation.

Let $\omega_0$ be a smooth differential $k$-form on $M$. We consider the
initial value problem
\begin{equation}
\label{eq:heat_forms}
\begin{cases}
\dfrac{\partial \omega}{\partial t}
=
-\tilde{\Delta}_{\nabla}\omega,
& (x,t)\in M\times(0,\infty),\\[8pt]
\omega(x,0)=\omega_0(x),
& x\in M,
\end{cases}
\end{equation}
where the projected ambient connection Laplacian
$\tilde{\Delta}_{\nabla}$, introduced in
\eqref{proyeLaplac}, is defined by
\begin{equation}
\label{eq:projected_connection_laplacian}
(\tilde{\Delta}_{\nabla}\omega)(x)
:=
\mathbf{P}_{\Lambda^kT_xM}
\bigl(\Delta_{\nabla}\omega(x)\bigr),
\end{equation}
where $\mathbf{P}_{\Lambda^kT_xM}$ denotes the orthogonal projection onto
$\Lambda^kT_xM$. Under the differential-array representation, the time
derivative is interpreted componentwise.

Our objective is to approximate the solution of
\eqref{eq:heat_forms} directly from a finite point cloud sampling of the
underlying manifold. To this end, we replace the continuous operator
$\tilde{\Delta}_{\nabla}$ by the discrete matrix
\begin{equation}
\label{matrixrepresentaciondiffusion}
\hat{A}
=
\frac{2}{t^2}\,
\mathbf{O}_k
*
(\mathbf{Id}-\mathbf{CL})
*
\mathbf{O}_k^{T},
\end{equation}
where $\mathbf{CL}$ is the data-driven operator introduced in
Section~\ref{matrixrepresentacionC}. By the consistency results established in
Theorems~\eqref{convergencerate}, \eqref{discreteinfgen}, and
\eqref{matrixlaplacianocompleta}, the matrix $\hat{A}$ provides a consistent
approximation of the projected ambient connection Laplacian. Since $\hat{A}$ is
constructed entirely from the sampled data, it yields a fully data-driven
discretization of the heat equation \eqref{eq:heat_forms}, enabling the
numerical approximation of the associated diffusion process directly on the
point cloud.

The kernel bandwidth parameter is chosen according to the asymptotic scaling
introduced in \eqref{parametroinfinite},
\begin{equation}
\label{eq:bandwidth_scaling}
t=N^{-2/(d+6)},
\end{equation}
which is known to provide the optimal convergence rate for graph-based
approximations of Laplace-type operators
\cite{singer2006graph}. 
Given an initial discrete differential $k$-array
$\omega^{(0)}$, we approximate the solution of
\eqref{eq:heat_forms} by the explicit (forward) Euler method. For
$s=0,1,2,\ldots$, we define
\begin{equation}
\label{eq:euler_scheme}
\omega^{(s+1)}
=
\omega^{(s)}
-
\tau_s\,\hat{A}\,\omega^{(s)},
\end{equation}
where $\tau_s>0$ denotes the time step at the $s$-th iteration.
This yields a fully data-driven approximation of the heat flow generated
by the projected ambient connection Laplacian on differential forms.

Since the discrete operator $\hat{A}$ is generally non-symmetric, the
classical CFL condition based on the largest eigenvalue is no longer
applicable. Instead, we employ the following norm-based sufficient
stability criterion:
\begin{equation}
\label{eq:tau_condition}
\tau_s
\le
\frac{1}{\|\hat{A}\|_2},
\end{equation}
where $\|\hat{A}\|_2$ denotes the spectral norm of $\hat{A}$.
This condition follows from the bound
$\rho(\hat{A})\le\|\hat{A}\|_2$ and guarantees that the explicit Euler
iteration remains contractive in the induced matrix norm. Although more
conservative than the exact spectral stability restriction, this choice
is inexpensive to evaluate and can be computed efficiently using
standard numerical linear algebra routines, making it particularly
suitable for large-scale data-driven problems
\cite{higham2008functions,trefethen2005spectra,moler2003nineteen}.

\subsection{Numerical Experiment}

We now present a numerical experiment illustrating the heat flow generated by
the projected ambient connection Laplacian. All computations were performed in
\textsc{MATLAB R2017b} on a laptop equipped with an Intel Core i5-1235U
processor (1.30~GHz) and 8~GB of RAM. The implementation is publicly available
in the GitHub repository \cite{codigoVF}.

The experiment is conducted on the two-dimensional unit sphere
\[
    S^2=\{x\in\mathbb{R}^3:\|x\|=1\},
\]
and considers the case $k=1$, where differential $1$-forms are identified with
tangent vector fields. Starting from a prescribed tangent vector field, we
numerically approximate its evolution under the heat flow induced by the
projected ambient connection Laplacian.

For visualization purposes, the computed solution is displayed both on the
embedded sphere and through the parametrization
$\Phi:[0,1]\times[0,1]\rightarrow S^2$ given by
\begin{equation}
\label{eq:param}
    \Phi(u,v)=
    \bigl(
        \sin(\pi u)\cos(2\pi v),\;
        \sin(\pi u)\sin(2\pi v),\;
        \cos(\pi u)
    \bigr),
    \qquad
    (u,v)\in[0,1]\times[0,1],
\end{equation}
where $u$ and $v$ denote the polar and azimuthal coordinates, respectively.
Each figure shows the pullback of the solution through $\Phi$, together with
its pointwise Euclidean norm evaluated at the sampled points.

Throughout the experiment, we generate $N=2000$ points independently and
uniformly distributed on $S^2$. The numerical solution of the heat equation is
computed using the iterative scheme introduced in \cref{ecuacioncalor}, with
time step
\begin{equation}
\label{eq:timestep}
    \tau_s=
   \frac{0.9}{\|\hat{A}\|_2},
\end{equation}
which satisfies the stability condition established in
\cref{eq:tau_condition}. Therefore, the numerical scheme is stable and convergent
throughout the experiment.

As the initial condition, we consider the tangent vector field
\begin{equation}
\label{eq:v1}
v_1^0(p)
=
(1,1,1)-\langle (1,1,1),p\rangle\,p,
\qquad p\in S^2,
\end{equation}
illustrated in Figure~\ref{fig:initial_vector_field}. The vector field
$v_1^0$ is obtained by orthogonally projecting the constant ambient vector
$(1,1,1)\in\mathbb{R}^3$ onto the tangent space $T_pS^2$ at each point
$p\in S^2$. Consequently, $v_1^0(p)\in T_pS^2$ for every $p\in S^2$.

\begin{figure}[htbp]
    \centering
    \includegraphics[width=\textwidth]{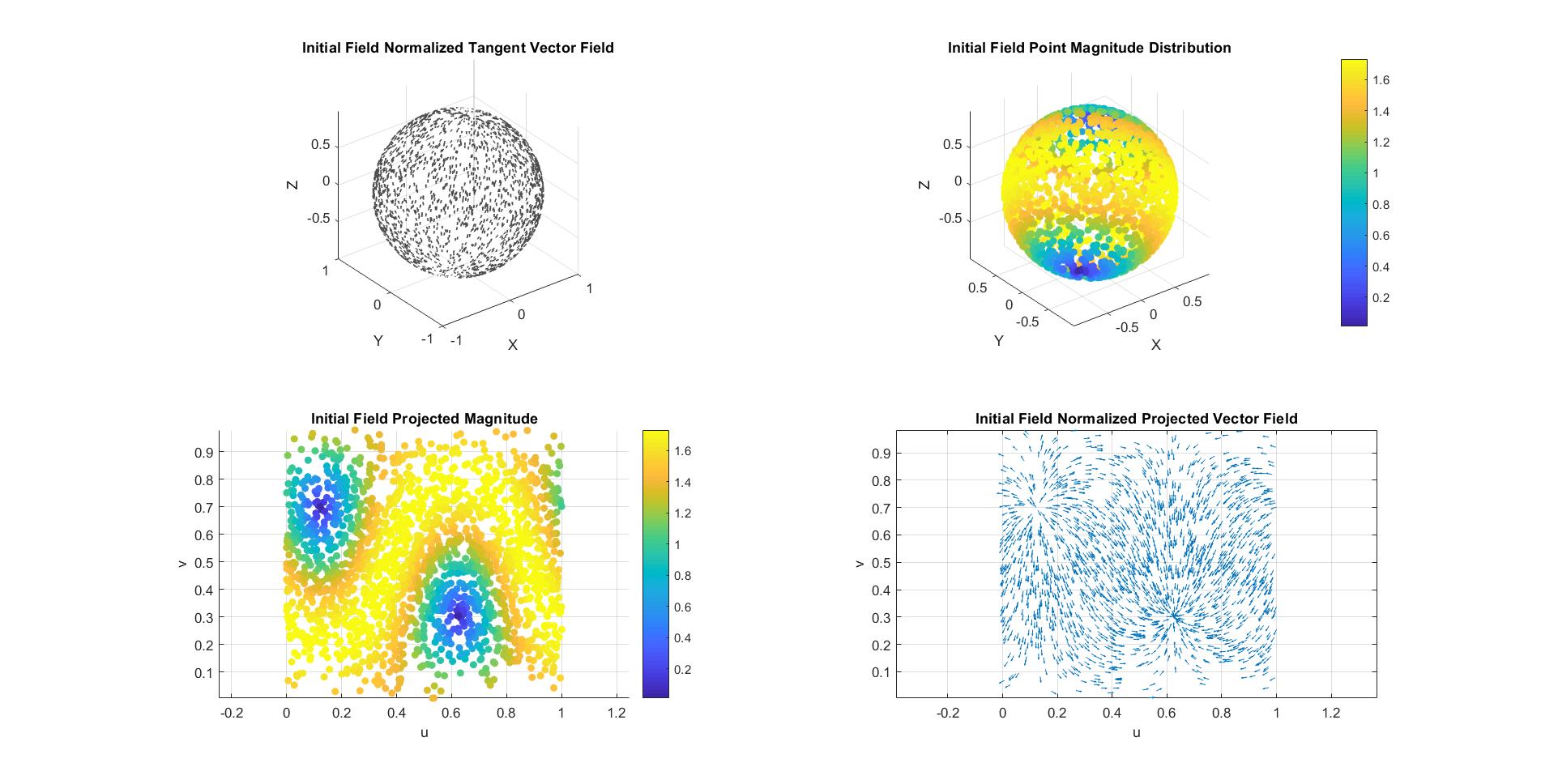}
    \caption{Initial vector field $v_1^0$. The figure shows the normalized
    tangent vector field together with its pointwise magnitude, displayed on
    both the three-dimensional sphere and its two-dimensional polar-coordinate
    parametrization $\Phi$.}
    \label{fig:initial_vector_field}
\end{figure}

Figure~\ref{fig:iterations} displays the pointwise magnitude of the tangent
vector field obtained after $s=10,20,30,$ and $40$ iterations of the proposed
iterative scheme. The corresponding evolution of the discrete $L^2$ norm is
shown in Figure~\ref{fig:iterationsL2norm}.

Both figures clearly indicate that the numerical solution converges to the
zero vector field as the number of iterations increases. This behavior is in
excellent agreement with the analytical solution of the continuous heat
equation. Indeed, the initial tangent vector field $v_1^0$ is an eigenvector
of the projected ambient connection Laplacian. A straightforward componentwise
computation of the ambient connection Laplacian gives
\begin{equation*}
    \Delta_{\bar{\nabla}}v_1^0
    =
    -2(1,1,1)
    +
    6\langle(1,1,1),p\rangle p.
\end{equation*}
Projecting this vector field onto the tangent space $T_p\mathbb{S}^2$ yields
\begin{equation*}
    (\widetilde{\Delta}_{\nabla}v_1^0)(p)
    :=
    \mathbf{P}_{T_p\mathbb{S}^2}
    \bigl(
    \Delta_{\bar{\nabla}}v_1^0(p)
    \bigr)
    =
    -2v_1^0(p),
\end{equation*}
which shows that $v_1^0$ is an eigenvector of the projected ambient connection
Laplacian with eigenvalue $-2$. Consequently, the solution of the vector heat
equation
\[
    \partial_t v=-\widetilde{\Delta}_{\nabla}v,
    \qquad
    v(\cdot,0)=v_1^0,
\]
is given explicitly by (see, e.g., \cite{rosenberg1997laplacian})
\begin{equation}
\label{eq:heat_solution}
    v(t)=e^{-2t}v_1^0.
\end{equation}
Hence,
\[
    \lim_{t\rightarrow\infty}v(t)=0,
\]
that is, the solution converges to the zero vector field.
Therefore, the numerical experiments confirm that the proposed explicit Euler
scheme accurately reproduces the predicted exponential decay, thereby
demonstrating its consistency with the analytical solution of the heat
equation.

\begin{figure}[htbp]
    \centering
    \includegraphics[width=\textwidth]{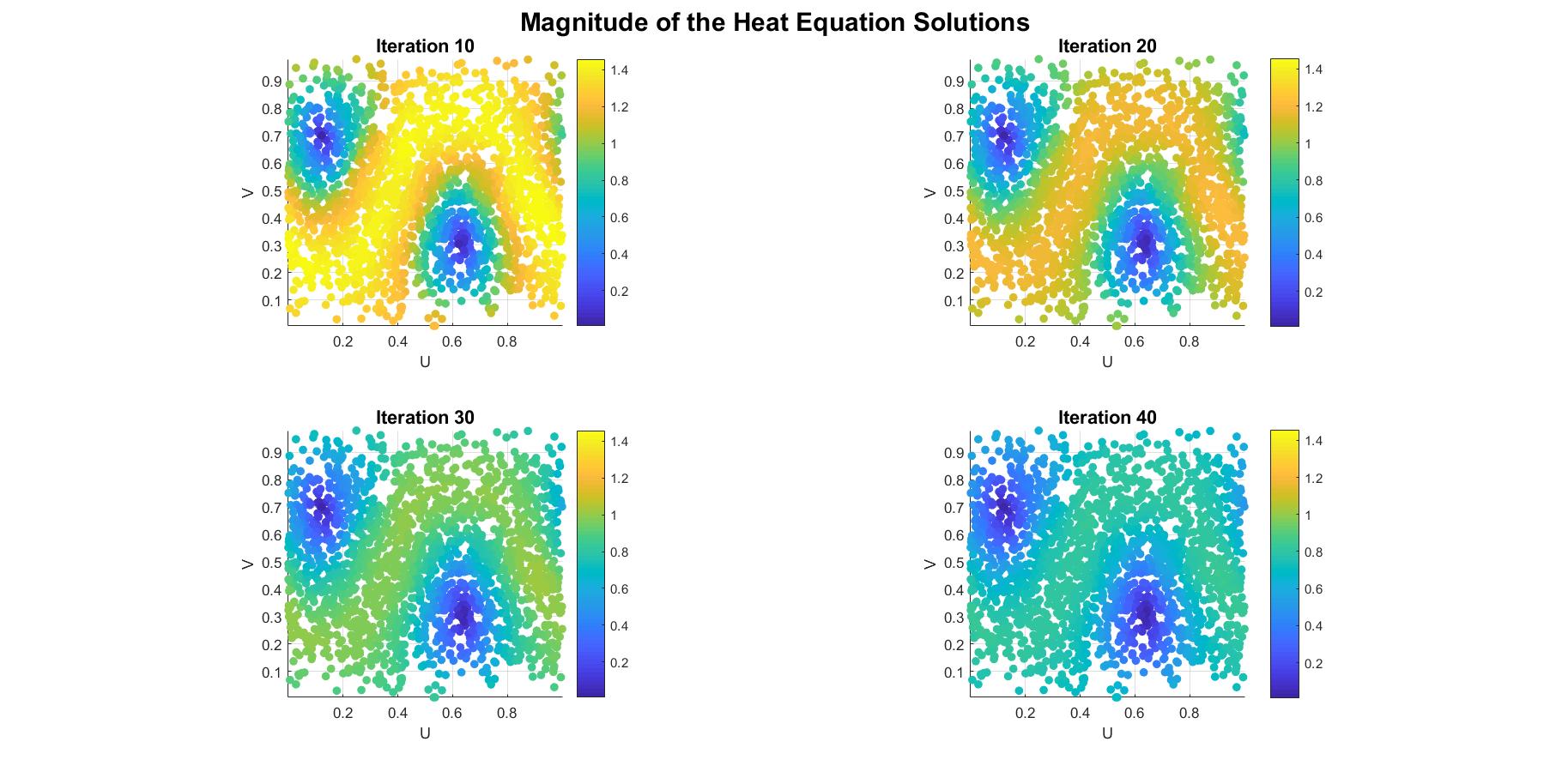}
    \caption{Pointwise magnitude of the numerical approximation of the heat
    flow obtained with the explicit Euler scheme \eqref{eq:euler_scheme} after
    $s=10,20,30,$ and $40$ iterations. The tangent vector field is visualized
    through the two-dimensional parametrization $\Phi$ of the unit sphere.
    The progressive decrease in the magnitude illustrates the convergence of
    the numerical solution toward the zero vector field.}
    \label{fig:iterations}
\end{figure}

\begin{figure}[htbp]
    \centering
    \includegraphics[width=\textwidth]{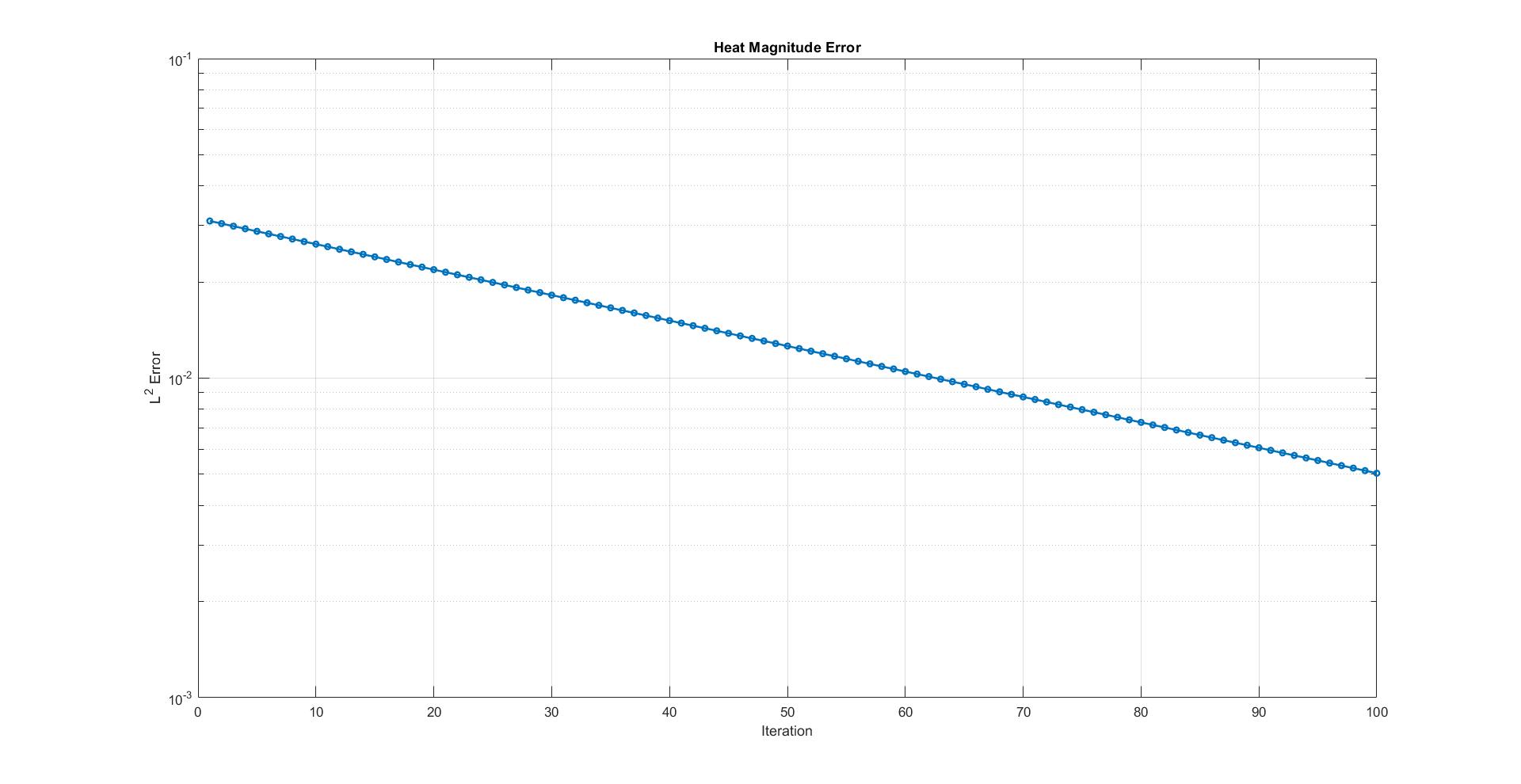}
    \caption{Evolution of the discrete $L^2$ norm of the numerical solution
    corresponding to the initial vector field $v_1^0$ during the first $100$
    Euler iterations. The vertical axis is displayed on a logarithmic scale,
    revealing the expected decay predicted by the analytical
    solution of the heat equation.}
    \label{fig:iterationsL2norm}
\end{figure}

\section{Conclusions and Future Work}

In this work, we introduced a data-driven discretization framework for
differential forms on smooth manifolds. Through the musical isomorphism,
differential forms are identified with alternating differential arrays,
providing a representation that transforms these geometric objects into
computationally tractable quantities suitable for numerical analysis.

From a computational perspective, we investigated the ambient connection
Laplacian, which is defined by applying the Laplace--Beltrami operator
componentwise to vector-valued functions. By projecting this operator onto
the discrete bundle of differential forms, we obtain a linear operator that
maps discrete differential forms into discrete differential forms. This
projected ambient connection Laplacian is closely related to the Hodge
Laplacian, differing only by lower-order perturbation terms determined by
the geometry of the underlying manifold.

A principal contribution of this work is the construction of a matrix-based,
data-driven approximation of the projected ambient connection Laplacian.
Building upon the classical theory of diffusion maps, we derive a discrete
operator that approximates the continuous projected ambient connection
Laplacian directly from point cloud data. This approximation provides a
practical numerical framework for simulating diffusion processes on
differential forms without requiring a mesh or explicit knowledge of the
manifold geometry.

Based on the proposed discretization, we developed an explicit Euler scheme
for the numerical approximation of the heat equation on differential forms.
The numerical experiments demonstrate that the proposed method accurately
reproduces the theoretical exponential decay predicted by the continuous
model, thereby confirming the consistency of the discrete approximation and
its effectiveness in capturing the diffusion dynamics of differential forms.

Future research will focus on extending the proposed framework to more
general geometric evolution equations involving differential forms and
vector fields. In particular, the proposed methodology provides a promising
foundation for the numerical simulation of partial differential equations
governed by diffusion operators on manifolds, as well as for applications in
computational differential geometry, geometric data analysis, and
data-driven scientific computing.

\bigskip
\noindent\textbf{Acknowledgements.} 
The first author was supported by the \textbf{Centro de Modelamiento Matemático (CMM)} through the BASAL grant FB210005 for Centers of Excellence from ANID-Chile. The second author was supported by the ANID Fondecyt Postdoctoral Grant No.~3220631.


\bibliographystyle{apalike}
\bibliography{bibli}


\begin{appendices}

\section{Proof of  Proposition \ref{prop:musical-generalization}}
\label{proofprop1}

\begin{proof}
\textbf{Existence.}
Since $\omega \colon V^k \to \mathbb{R}$ is multilinear, the universal
property of the tensor product
\cite[Prop.~12.7]{lee2012introduction}
yields a unique linear functional
\[
    \widetilde{\omega} \colon V^{\otimes k} \to \mathbb{R}
\]
such that
\[
    \widetilde{\omega}(v_1 \otimes \cdots \otimes v_k)
    =
    \omega(v_1,\dots,v_k)
\]
for all $v_1,\dots,v_k \in V$.

Because $V$ is finite dimensional, the tensor space
$V^{\otimes k}$ is also finite dimensional. Endow
$V^{\otimes k}$ with the Frobenius inner product
$\langle \cdot,\cdot \rangle_F$.
By the Riesz representation theorem, there exists a unique tensor
$W \in V^{\otimes k}$ such that
\[
    \widetilde{\omega}(T)
    =
    \langle W,T\rangle_F
\]
for every $T \in V^{\otimes k}$.
In particular,
\[
    \omega(v_1,\dots,v_k)
    =
    \langle W,\,
    v_1 \otimes \cdots \otimes v_k
    \rangle_F .
\]

It remains to show that $W$ is alternating.
Let $\sigma \in S_k$, and define the permuted tensor
$W^\sigma \in V^{\otimes k}$ by
\[
    W^\sigma(i_1,\dots,i_k)
    :=
    W(i_{\sigma(1)},\dots,i_{\sigma(k)}).
\]
Then, for arbitrary vectors $v_1,\dots,v_k \in V$, we have
\begin{align*}
    \langle W^\sigma,\,
    v_1 \otimes \cdots \otimes v_k
    \rangle_F
    &=
    \langle W,\,
    v_{\sigma(1)} \otimes \cdots \otimes v_{\sigma(k)}
    \rangle_F \\
    &=
    \omega(v_{\sigma(1)},\dots,v_{\sigma(k)}) \\
    &=
    \operatorname{sgn}(\sigma)\,
    \omega(v_1,\dots,v_k) \\
    &=
    \left\langle
        \operatorname{sgn}(\sigma)\, W,\,
        v_1 \otimes \cdots \otimes v_k
    \right\rangle_F .
\end{align*}
Since decomposable tensors span $V^{\otimes k}$, the above identity implies
that
\[
    \langle W^\sigma,T\rangle_F
    =
    \left\langle
        \operatorname{sgn}(\sigma)\,W,
        T
    \right\rangle_F
\]
for every $T \in V^{\otimes k}$.
By the uniqueness of the Riesz representative, we conclude that
\[
    W^\sigma
    =
    \operatorname{sgn}(\sigma)\,W .
\]
Hence $W$ is alternating.

\medskip

\textbf{Uniqueness.}
Suppose that $W_1,W_2 \in \Lambda^k V$ satisfy
\[
    \omega(v_1,\dots,v_k)
    =
    \langle W_j,\,
    v_1 \otimes \cdots \otimes v_k
    \rangle_F,
    \qquad j=1,2,
\]
for all $v_1,\dots,v_k \in V$.
Then
\[
    \langle W_1-W_2,\,
    v_1 \otimes \cdots \otimes v_k
    \rangle_F
    =
    0
\]
for all $v_1,\dots,v_k \in V$.
Since decomposable tensors span $V^{\otimes k}$, it follows that
\[
    \langle W_1-W_2,T\rangle_F = 0
\]
for all $T \in V^{\otimes k}$.
Therefore $W_1-W_2=0$, and consequently $W_1=W_2$.
\end{proof}

\end{appendices}

\end{document}